\tikzset{
        cvertex/.style={circle,draw=black,inner sep=1pt,outer sep=3pt},
        vertex/.style={circle,fill=black,inner sep=1pt,outer sep=3pt},
        DB/.style={circle,draw=black,circle,fill=black,inner sep=0pt, minimum size=4pt},
        DW/.style={circle,draw=black,inner sep=0pt, minimum size=4pt},
        star/.style={circle,fill=yellow,inner sep=0.75pt,outer sep=0.75pt},
        tvertex/.style={inner sep=1pt,font=\scriptsize},
        gap/.style={inner sep=0.5pt,fill=white}}
\newtheorem{thm}{Theorem}[section]
\newtheorem{prop}[thm]{Proposition}
\newtheorem{lemma}[thm]{Lemma}
\newtheorem{cor}[thm]{Corollary}
\theoremstyle{definition} 
\newtheorem{example}[thm]{Example}
\newtheorem{remark}[thm]{Remark}
\numberwithin{equation}{section}
\newcommand{\n}{\mathfrak{n}}
\renewcommand{\t}[1]{\textnormal{#1}}
\def\CM{\mathop{\rm CM}\nolimits}
\def\mod{\mathop{\rm mod}\nolimits}
\def\Qcoh{\mathop{\rm Qcoh}\nolimits}
\def\uEnd{\mathop{\underline{\rm End}}\nolimits}
\def\End{\mathop{\rm End}\nolimits}
\def\Ext{\mathop{\rm Ext}\nolimits}
\def\Spec{\mathop{\rm Spec}\nolimits}
\newcommand{\con}{\mathrm{con}}
\newcommand{\CA}{\mathrm{A}_{\con}}
\def\ab{\mathop{\rm ab}\nolimits}
\def\redu{\mathop{\rm red}\nolimits}
\newcommand\art{\mathsf{Art}}
\newcommand\Sets{\mathsf{Sets}}
\newcommand\Def{\mathcal{D}ef}
\newcommand{\cF}{\mathcal{F}}
\newcommand{\cL}{\mathcal{L}}
\newcommand{\cO}{\mathcal{O}}
\newcommand{\cX}{\mathcal{X}}
\newcommand{\cY}{\mathcal{Y}}
\newcommand{\dmapsto}{\rotatebox[origin=c]{-90}{$\mapsto$}}
\newlength\tempWidth
\newcommand{\defColor}{gray!50}
\def\gridShift{-4}
\def\bend{0.1}
\def\len{0.8}
\def\dotsize{0.1}
\newcommand{\defCurveCoords}[3]{
  (#1+\bend,-#3,#2) .. controls (#1-\bend,-#3/3,#2) and (#1-\bend, #3/3,#2) .. (#1+\bend,#3,#2)
}
\newcommand{\defDrawDot}[2]{
    \draw[fill] (#1+\dotsize,\gridShift,#2+\dotsize) -- (#1+\dotsize,\gridShift,#2-\dotsize) -- (#1-\dotsize,\gridShift,#2-\dotsize) -- (#1-\dotsize,\gridShift,#2+\dotsize) -- cycle
    }
\newcommand{\defCurve}[3]{
  \draw[color=#3] \defCurveCoords{#1}{#2}{\len}
}
\newcommand{\defUniversalCurves}[4]{
  \foreach \i in {0,...,#4} {
    \ifx\i\zero \defCurve{#1+\i}{#2}{black} \else \defCurve{#1+\i}{#2}{\defColor} \fi;
    \defDrawDot{#1+\i}{#2};
  };
  \foreach \j in {1,...,#3} {
    \defCurve{#1}{#2+\j}{\defColor};
    \defDrawDot{#1}{#2+\j};
  }
}
\begin{document}
\title{\textsc{Gopakumar--Vafa invariants do not determine flops}}
\author{Gavin Brown}
\address{Gavin Brown, Mathematics Institute, Zeeman Building, University of Warwick, Coventry, CV4 7AL, UK.}
\email{G.Brown@warwick.ac.uk}
\author{Michael Wemyss}
\address{Michael Wemyss, The Mathematics and Statistics Building,
University of Glasgow, University Place, Glasgow, G12 8SQ, UK.}
\email{michael.wemyss@glasgow.ac.uk}
\begin{abstract}
Two $3$-fold flops are exhibited, both of which have precisely one flopping curve.  One of the two flops is new, and is distinct from all known algebraic $D_4$-flops.   It is shown that the two flops are neither algebraically nor analytically isomorphic, yet their curve-counting Gopakumar--Vafa invariants are the same.  We further show that the contraction algebras associated to both are not isomorphic, so the flops are distinguished at this level.  This shows that the contraction algebra is a finer invariant than various curve-counting theories, and it also provides more evidence for the proposed analytic classification of $3$-fold flops via contraction algebras.
\end{abstract}
\subjclass[2010]{Primary 14E30; Secondary 14J30, 16S38}
\thanks{M.W.\ was supported by EPSRC grant~EP/K021400/2.}
\maketitle
\parindent 20pt
\parskip 0pt

\section{Introduction}

Flopping neighbourhoods are one of the most elementary building blocks of higher dimensional algebraic geometry, and even in dimension three they exhibit a very rich structure. Over the past thirty years the invariants attached to such curves have become increasingly fine, from the trichotomy in the normal bundle \cite{Laufer}, to the \emph{length} in \cite{Kollar}, to the ADE identification in \cite{KM,Kawa}, to the association of a finite tuple of integers via  the Gopakumar--Vafa (=GV) invariants \cite{KatzGV}.    At each stage, the produced invariant is strictly finer than the last, with the GV invariants linking to Donaldson--Thomas theory and all other modern curve counting notions (see e.g.\ \cite{PT}). 

On the other hand, contraction algebras were introduced in \cite{DW1}, partially to provide a new curve invariant, but mainly to unify the homological approaches to derived symmetries and twists \cite{Bridgeland, Chen, Toda}.   With their roots in homological algebra, and because they are an algebra as opposed to a number, this additional structure allows us to use contraction algebras to establish and control many geometric processes \cite{DW2,HomMMP}, whilst at the same time recover the GV and other invariants \cite{DW1,TodaWidth, HuaToda} in a variety of natural ways.

In this paper, we use the algebra structure to show that the contraction algebra is a strictly finer invariant than that of Gopakumar--Vafa.  This is in some ways surprising: the GV invariants are indeed enough to classify Type $A$ flops \cite{Pagoda}.  The trick is to use noncommutativity.  We produce two flops, and we show that their contraction algebras are not isomorphic, although both have the same dimension.  Aside from the issue of actually constructing such an example, which we come back to below, we remark here that the isomorphism problem is delicate and is in general also difficult.  Deciding when two finite dimensional algebras are not isomorphic is tricky, especially in the situation here, when by design all the standard numerical information attached to each is the same.

The main result is the following, where $g$ is the standard Laufer flop \cite{Laufer}.

\begin{thm}[\ref{not iso}, \ref{GV same}, \ref{Acon not iso}]\label{main intro}
Consider the flopping contractions $f\colon X\to\Spec R$ and $g\colon Y\to\Spec L$ constructed in \ref{ex1} and \ref{ex2}. Then the following statements hold.
\begin{enumerate}
\item $R$ is not analytically (or algebraically) isomorphic to $L$.
\item The Gopakumar--Vafa invariants associated to $f$ and $g$ are the same.
\item The contraction algebras associated to $f$ and $g$ are not isomorphic.
\end{enumerate}
\end{thm}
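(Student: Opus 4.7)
The plan is to handle the three claims in turn. For claim (1), observe that since $g$ is the standard Laufer flop, $L$ is an explicit $cD_4$-hypersurface singularity whose equation is known. The approach is to extract the defining equation of $R$ from the construction of \ref{ex1} and compute local discrete invariants---for instance the tangent cone, the Tjurina or Milnor number, or a generic hyperplane section---to verify that at least one of them differs between $R$ and $L$. Any such mismatch simultaneously rules out algebraic and analytic isomorphism, since completion preserves these invariants. If direct comparison at the level of equations proves cumbersome, a structural alternative is to pass to the maximal Cohen--Macaulay modules (or to the associated noncommutative crepant resolutions) and distinguish at that level.

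For claim (2), the GV invariants of a one-curve $3$-fold flop form a finite tuple of non-negative integers, of length bounded by the length invariant (at most six entries in the $D_4$ case). Since the contraction algebra determines the GV invariants via the formulas of Toda and Hua--Toda cited in the introduction, the natural route is to read off the tuples from $\CA(f)$ and $\CA(g)$ once both algebras are in hand. For $g$ the answer is classical for the Laufer flop, so the task reduces to carrying out the same computation for $f$, either from a quiver-with-potential presentation of $\CA(f)$, or by analysing the normal bundle and multiple covers of the flopping curve in $X$ directly.

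Claim (3) is the heart of the paper, and the main obstacle. The plan is to present both $\CA(f)$ and $\CA(g)$ explicitly as quotients $\C\langle x,y\rangle/I$, that is, as Jacobi algebras of a one-loop quiver with superpotential. By claim (2) and the general theory, the two algebras then have the same dimension, and all of the standard numerical invariants---Hilbert series, abelianisation, centre, and so on---necessarily coincide by design. The real work is to locate a genuinely \emph{non-abelian} algebra invariant that separates them. Plausible candidates include the algebra structure (not merely the dimensions) on successive quotients $\CA/\mathrm{rad}(\CA)^k$ for small $k$, the explicit shape of low-degree products among a set of algebra generators, or an obstruction to transforming one superpotential into the other by an automorphism of the free algebra. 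Showing that some such invariant really does differ in this concrete example is the crucial step, and is precisely what separates the contraction algebra from the numerical invariants it refines.
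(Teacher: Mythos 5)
Your plans for parts (1) and (2) align with the paper's actual route: part (1) is settled there by the Tjurina number ($10$ for $R$ versus $11$ for $L$), and part (2) by computing both contraction algebras, observing both are nine-dimensional, and applying Toda's formula $\dim_{\mathbb{C}}\CA=\dim_{\mathbb{C}}\CA^{\ab}+\sum_j j^2 n_j$ to get $n_1=5$, $n_2=1$ in both cases. One caution on (1): not every invariant on your candidate list works. The Milnor numbers of $R$ and $L$ are \emph{both} $11$ after completion, so the Milnor number fails to distinguish the analytic germs; you must actually exhibit an invariant that differs, and the Tjurina number is the one that does. Saying ``verify that at least one of them differs'' is a plan, not yet a proof.

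The genuine gap is in part (3), which you yourself identify as the heart of the paper but do not carry out. Listing ``plausible candidates'' (radical filtration quotients, low-degree products, superpotential automorphisms) is not a proof, and moreover the first candidate is unlikely to succeed: the two algebras are engineered so that essentially all such numerical data coincide (the paper even shows $\Lambda_{\con}^{\ab}\cong\Gamma_{\con}^{\ab}$). The paper's actual argument is a direct, concrete computation: present $\Lambda_{\con}=\mathbb{C}\langle x,y\rangle/(xy+yx,\,x^3-y^2)$ with explicit nine-element basis $\{1,y,x,y^2,yx,x^2,y^2x,yx^2,y^2x^2\}$ and $\Gamma_{\con}=\mathbb{C}\langle a,b\rangle/(ab+ba,\,-a^2+b^3+aba)$; establish the auxiliary identities $a^3=0$ and $b^6=0$ in $\Gamma_{\con}$; then suppose $\uppsi\colon\Gamma_{\con}\to\Lambda_{\con}$ is an isomorphism, note it must preserve the Jacobson radical, write $\uppsi(a)$ and $\uppsi(b)$ as general linear combinations of the basis elements of the radical, and impose the relations $\uppsi(a)^3=0$, $\uppsi(a)\uppsi(b)+\uppsi(b)\uppsi(a)=0$ and $-\uppsi(a)^2+\uppsi(b)^3+\uppsi(a)\uppsi(b)\uppsi(a)=0$. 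Expanding in the basis forces $\uplambda_2=\uplambda_5=\upmu_1=\upmu_5=0$ while $\uplambda_1\neq0$ and $\upmu_2\neq0$ (radical mod radical-squared considerations), and the final relation yields $\uplambda_1^2\upmu_2=0$, a contradiction. Without executing some computation of this kind --- or an equivalent obstruction argument --- your proposal does not establish the main claim of the theorem.
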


It is conjectured in \cite[1.4]{DW1} that contraction algebras are  \emph{the} analytic classification of $3$-fold flops.  Whilst the new flop $f$ may look like it comes out of the blue, we found it during our systematic attempts to approach the conjecture based on an explicit gluing via a superpotential; on the noncommutative side, the example is much clearer.  Indeed, the flop $f$ was constructed by \emph{assuming} the above conjecture is true, and working backwards, thus the results in this paper add some weight to the conjecture.  We also remark that there are  tables of data which numerically suggest, but do not quite yet prove, that different flops having the same GV invariants is actually quite typical behaviour. 

It is perhaps worth explaining the heuristic reason as to why the noncommutativity of the contraction algebra helps, rather than hinders, distinguishing the two flops above.  Whilst algebraically the two commutative curves $x^3-y^2$ and $x^3(x+1)-y^2$ are different, analytically around the origin, their coordinate rings are isomorphic.  Set $\widehat{\mathbb{A}}_{\uplambda}\colonequals \mathbb{C}\langle\!\langle x,y\rangle\!\rangle/ xy-\uplambda yx$, where implicitly we consider the closure of all ideals.  Then the above famous algebro-geometric curve example is precisely the statement that
\begin{equation}
\frac{\widehat{\mathbb{A}}_{1}}{x^3-y^2}\cong
\frac{\widehat{\mathbb{A}}_1}{x^3(x+1)-y^2}.\label{2 cont alg A}
\end{equation}
The third part of \ref{main intro} turns out to be equivalent to establishing the more surprising statement that there is no such isomorphism in the quantum plane, namely
\begin{equation}
\frac{\widehat{\mathbb{A}}_{-1}}{x^3-y^2}\ncong
\frac{\widehat{\mathbb{A}}_{-1}}{x^3(x+1)-y^2}.\label{2 cont alg}
\end{equation}
Heuristically, noncommutativity gives the flexibility to distinguish: since  $y$ no longer commutes with $x$, it no longer commutes with $\sqrt{x+1}$, so we should expect the isomorphism in \eqref{2 cont alg A} to break down.  It turns out that the left hand side of \eqref{2 cont alg} is the contraction algebra of $g$ \cite{DW1}, and we show in \S\ref{calc cont section} and \ref{intro ok} that the right hand side of \eqref{2 cont alg} is the contraction algebra of $f$.  The proof of \eqref{2 cont alg} is somewhat more involved than this heuristic argument; we give a direct proof in \ref{Acon not iso}, but it is also possible to give a computer algebra verification by adapting the Shirayanagi algorithm \cite{Shirayanagi}.

\subsection*{Acknowledgements} The authors would like to thank Agata Smoktunowicz and Natalia Iyudu for many helpful discussions, and for sharing with us a direct proof that the algebra in \ref{L cont alg} and the algebra in \ref{intro ok} are not isomorphic.

\section{The Two Flops}

Here the two main examples are introduced.  All the calculations in \ref{ex1} are easy enough to be done by hand, but to allay any possibilities of error, we have included in Appendix~\ref{theappendix} computer algebra codings that can be used to independently check all claims.

\begin{example}[The new flop $R$]\label{ex1}
Consider the hypersurface $R\colonequals \mathbb{C}[u,v,x,y]/(f)$, where 
\[
f\colonequals u^2+v^2(x+y)+x(x^2+xy^2+y^3).
\]
By the Jacobi criterion, $R$ has a unique isolated singular point, at the origin. Being a hypersurface in $\mathbb{A}^4$, clearly $\Spec R$ is a Gorenstein $3$-fold.  We next verify that it is the base of a simple flopping contraction, by constructing a small resolution.  The same calculation shows that $R$ is $cD_4$, although this can also be verified at once from the above equation.

To construct a small resolution, blowup the reflexive ideal $I\colonequals (vx-uy, xy^2+v^2, x^2y+uv)$ to obtain a projective birational morphism
\[
X\to \Spec R.
\]
Here we summarise the calculation by hand; the computation using Singular is summarised in the appendix \S\ref{app flops}. The blowup $X$ is covered by  two affine open charts, the first of which is given by the smooth hypersurface
\[
U_1\colonequals \Spec \mathbb{C}[x_3,x_4,y_1,y_2]/(x_3(y_1^2+1)+x_4y_1^2+y_2^2)
\]
with map to the base
\[
\begin{array}{c}
(x_3,x_4,y_1,y_2)\in U_1\\
\dmapsto\\
(x_3x_4y_1+x_4^2y_1+x_3y_2,x_3y_1-x_4y_2,x_3,x_4)\in \Spec R.
\end{array}
\]
Above the origin of $\Spec R$ consists of all points $(0,0,y_1,y_2)$ of $U_1$ such that the defining relation of $U_1$ holds, so necessarily $y_2^2=0$. Thus the fibre above the origin is a single curve, with scheme multiplicity two.  The second open chart is given by the smooth hypersurface
\[
U_2\colonequals \Spec \mathbb{C}[x_2, x_4, y_0, y_2]/(x_2y_0^3+x_4y_0^2y_2+x_2y_0+x_4y_2+y_2^2+x_4)
\]
with map to the base
\[
\begin{array}{c}
(x_2,x_4,y_0,y_2)\in U_2\\
\dmapsto\\
(-x_2x_4y_0^2-x_4^2y_0y_2+x_2y_2, x_2,x_2y_0+x_4y_2,x_4)\in \Spec R.
\end{array}
\]
Here the fibre above the origin consists of $(0,0,y_0,y_2)$ such that $y_2^2=0$, which again is a curve.  It is an easy check to see that the reduced fibre above the origin glues via
\[
(0,0,y_1,0)\leftrightarrow (0,0,y_1^{-1},0)
\]
and so is $\mathbb{P}^1$.  It follows that $X\to\Spec R$ is a smooth flopping contraction, and thus $R$ is cDV.  Since by the above calculation the scheme fibre has multiplicity two, we deduce that this must be a $cD_4$ flop \cite{KM,Kawa}.  
\end{example}

\begin{example}[The standard Laufer flop $L$]\label{ex2}
Consider $L\colonequals \mathbb{C}[u,v,x,y]/(g)$, where
\[
g\colonequals u^2+v^2y-x(x^2+y^3).
\]
This has a unique singular point at the origin, and indeed $\Spec L$ is the base of the standard Laufer flop.  Blowing up the reflexive ideal $(x^2+y^3,vx+uy,ux-vy^2)$ gives a projective birational morphism
\[
Y\to\Spec L
\]
where $Y$ is smooth.  The reduced scheme fibre above the origin is $\mathbb{P}^1$, and the full scheme fibre has multiplicity two.  This was the first known example of a $cD_4$ flop \cite{Laufer,KM,Pagoda}.
\end{example}

\begin{remark}\label{not iso}
$R$ is not analytically isomorphic to $L$, and hence also $R\ncong L$  algebraically.  This can be seen directly by computing the Tjurina numbers of both (see e.g.\ \S\ref{app flops}), but it also follows from the non-isomorphism of the contraction algebras later in \ref{Acon not iso}.
\end{remark}

\section{GV invariants and Contraction algebras}

The GV invariants of both the flopping contractions $X\to\Spec R$ and $Y\to \Spec L$ in the previous section are determined by their contraction algebras \cite{TodaWidth}, and this section briefly reviews these notions.  

\subsection{Contraction Algebra background}
Throughout, consider a general $3$-fold flopping contraction $f\colon U\to\Spec \mathfrak{R}$, where $U$ is smooth, $f^{-1}(0)\colonequals C$ and $C^{\redu}\cong \mathbb{P}^1$, and for simplicity assume that $\mathfrak{R}$ is complete local.  To this data, one can associate the \emph{contraction algebra} $\CA$, which can be defined \cite[\S3]{DW1} as the representing object of the noncommutative deformation functor
\[
\Def\colon\art_1\to\Sets,
\]
where $\art_1$ is the category of augmented finite dimensional $\mathbb{C}$-algebras. By definition $\Def$ sends
\[
(\Gamma,\n)\mapsto \left. \left \{ (\cF,\upphi,\updelta)
\left|\begin{array}{l}\cF\in\Qcoh U\\
\upphi\colon \Gamma\to\End_U(\cF) \mbox{ is a $\mathbb{C}$-algebra homomorphism}\\ 
-\otimes_\Gamma \cF\colon\mod \Gamma\to \Qcoh U \t{ is exact}\\ \updelta\colon (\Gamma/\n)\otimes_\Gamma \cF\xrightarrow{\sim}\cO_{\mathbb{P}^1}(-1)\end{array}\right. \right\} \middle/ \sim \right.
\]
where the equivalence relation $\sim$ is outlined in detail in \cite[2.4]{DW1}.  For the purpose of this paper, the following information suffices.
\begin{thm}\label{CA summary}
Consider a complete local flopping contraction $U\to\Spec\mathfrak{R}$, as above.  Then the following statements hold.
\begin{enumerate}
\item\label{CA summary 1} $\CA$ is a finite dimensional algebra.
\item\label{CA summary 2} $\CA$ is not commutative if and only if $C$ is a $(-3,1)$-curve.  Moreover, in this case $\CA$ can be presented as
\[
\CA\cong\frac{\mathbb{C}\langle\!\langle x,y\rangle\!\rangle}{(\delta_xW, \delta_yW)} 
\]
for some superpotential $W$, where $(\delta_xW, \delta_yW)$ denotes the closure of the ideal generated by the formal derivatives $\delta_xW$ and $\delta_yW$.
\item\label{CA summary 3} Suppose that $C$ is a $(-3,1)$-curve.  If $X\in\CM \mathfrak{R}$ is a non-free rank two module, and $\Ext^1_{\mathfrak{R}}(X,X)=0$, then $\CA\cong\uEnd_{\mathfrak{R}}(X)$ and $\dim_{\mathbb{C}}\CA=\dim_{\mathbb{C}}\Ext^2_{\mathfrak{R}}(X,X)$.
\end{enumerate}
\end{thm}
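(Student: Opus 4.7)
The plan is to deduce all three parts from the general noncommutative deformation machinery of \cite{DW1}, together with the tilting equivalence between a flopping contraction and its associated noncommutative crepant resolution. For part (1), I would first show that $\Def$ is prorepresentable by verifying Schlessinger-type conditions adapted to the noncommutative setting (Laudal--Segal). The tangent and obstruction spaces $\Ext^i_U(\cO_C(-1),\cO_C(-1))$ are finite dimensional because $f$ is a proper contraction with $C^{\redu}\cong\mathbb{P}^1$, and genuine finite-dimensionality of $\CA$ (rather than only pro-finite-dimensionality) follows from the contraction hypothesis, which forces the deformation filtration to stabilise after finitely many steps.

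For part (2), the key invariant is $\dim_{\mathbb{C}}\Ext^1_U(\cO_C(-1),\cO_C(-1))$, which equals the minimal number of generators of $\CA$. A check using the Laufer trichotomy of normal bundles shows this dimension is at most $1$ for $(-1,-1)$- and $(-2,0)$-curves, forcing $\CA$ to be commutative there, and equals $2$ exactly for $(-3,1)$-curves. For the superpotential description I would invoke the cyclic $A_\infty$-structure on the minimal model of $\RHom_U(\cO_C(-1),\cO_C(-1))$, which exists because the formal neighbourhood of $C$ in $U$ is a smooth $3$-Calabi--Yau variety; Bocklandt--Van den Bergh type arguments then realise $\CA$ as the Jacobi algebra of a formal cyclic potential $W$ in two noncommuting variables.

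For part (3), I would exploit the NCCR. Under the stated hypotheses on $X$, the module $\mathfrak{R}\oplus X$ is a tilting generator, giving a derived equivalence between $U$ and $\Lambda\colonequals\End_\mathfrak{R}(\mathfrak{R}\oplus X)$. Transporting $\cO_C(-1)$ across this equivalence yields the simple $\Lambda$-module at the vertex indexed by the summand $X$, whose noncommutative deformations are therefore also represented by $\CA$. Computing these deformations directly identifies $\CA$ with the quotient of $\Lambda$ by the ideal of morphisms factoring through $\mathfrak{R}$, which is by definition $\uEnd_\mathfrak{R}(X)$. Finally, the $2$-Calabi--Yau property of $\uCM\mathfrak{R}$ in the complete local Gorenstein $3$-fold setting yields a functorial isomorphism $\uHom_\mathfrak{R}(X,X)\cong D\Ext^2_\mathfrak{R}(X,X)$, matching $\mathbb{C}$-dimensions.

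The most delicate step is part (3): tracking the derived equivalence carefully enough to pin down the correspondence between $\cO_C(-1)$ and the correct simple of $\Lambda$, and to verify that noncommutative deformation functors transport faithfully across the equivalence. Parts (1) and (2) are relatively formal once the relevant $A_\infty$-structures and representability criteria are in place.
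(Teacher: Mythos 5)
Your proposal essentially tries to reprove the results of \cite{DW1} from scratch, whereas the paper's proof of parts (1) and (2) is a direct citation (\cite[2.13]{DW1} for finite-dimensionality and the (non)commutativity criterion, and \cite{VdBCY} together with the fact that $\CA$ is the factor of an NCCR by a vertex idempotent for the superpotential presentation). That difference of route would be acceptable, but two of your steps have real gaps. In part (2), computing $\dim_{\mathbb{C}}\Ext^1_U(\cO_C(-1),\cO_C(-1))=2$ for a $(-3,1)$-curve only tells you that $\CA$ needs two generators; a two-generated complete local algebra can perfectly well be commutative, so this does not establish that $\CA$ is noncommutative in the $(-3,1)$ case. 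The noncommutativity is a genuine theorem (resting ultimately on the flopping curve having length greater than one), not a consequence of the generator count, and your sketch supplies no argument for it.

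The more serious gap is in part (3). The entire content of that statement is that an \emph{arbitrary} non-free rank two rigid CM module $X$ computes $\CA$. You assume at the outset that $\mathfrak{R}\oplus X$ is a tilting generator derived-equivalent to $U$, but that is only known for the specific module $M$ arising from Van den Bergh's tilting bundle --- which is how $\CA$ is defined in the first place --- so your argument is circular unless you first show $X\cong M$. The paper does exactly this via \cite[4.13]{HomMMP}: there are precisely two non-free indecomposable rigid CM $\mathfrak{R}$-modules, each of rank equal to the length of the curve, which is $2$ for a $(-3,1)$-curve; hence $X$ cannot split into rank one summands (that would produce a rank one rigid non-free CM module, contradicting the classification), so $X$ is indecomposable and must be one of these modules, whence $\CA\cong\uEnd_{\mathfrak{R}}(X)$. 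Without this identification your derived-equivalence argument computes $\uEnd_{\mathfrak{R}}(M)$, not $\uEnd_{\mathfrak{R}}(X)$. Your final step, the duality $\uHom_{\mathfrak{R}}(X,X)\cong D\Ext^2_{\mathfrak{R}}(X,X)$ coming from the $2$-Calabi--Yau property of the stable category, is fine and matches the cited \cite[5.2]{DW1}.
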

\begin{proof}
Part \eqref{CA summary 1} is \cite[2.13(1)]{DW1}, and the first statement of part \eqref{CA summary 2} is \cite[2.13]{DW1}. The fact that $\CA$ is a superpotential algebra is a consequence of that it is a factor of an NCCR \cite[\S3]{DW1}, which since $\mathfrak{R}$ is complete, is a superpotential algebra \cite{VdBCY}.

For part \eqref{CA summary 3}, by \cite[4.13]{HomMMP} there are only two non-free indecomposable CM $\mathfrak{R}$-modules $M_i$ for which $\Ext^1_{\mathfrak{R}}(M_i,M_i)=0$, and the rank of each $M_i$ equals the length of the flopping curve.  Since the curve is a $(-3,1)$-curve, the length is strictly greater than one.  Hence $X$ cannot split into two rank one summands, as then there would be a rank one rigid non-free CM $\mathfrak{R}$-module. We conclude that $X$ is indecomposable, so it must be isomorphic to one of the $M_i$.  By definition, $\CA$ is $\uEnd_{\mathfrak{R}}(M_i)$, and hence is isomorphic to $\uEnd_{\mathfrak{R}}(X)$.  The last statement regarding the dimension is then \cite[5.2]{DW1}.
\end{proof}

Later, \ref{CA summary}\eqref{CA summary 3} will be used to calculate the contraction algebra, and also to compute its dimension, without requiring  knowledge of its algebra structure.

\subsection{GV Invariants} 
Each flopping contraction $f$ of length $\ell$ has an associated tuple of integers $(n_1,\hdots,n_\ell)$ called the \emph{Gopakumar--Vafa} invariants.  These can be defined as follows. As in \cite[\S2.1]{BKL}, there exists a flat deformation
\[
\begin{tikzpicture}[>=stealth]
\node (X) at (0,0) {$\cX$};
\node (Y) at (0,-1) {$\cY$};
\node (T) at (0.75,-1.75) {$T$};
\draw[->] (X)--(Y);
\draw[->] (Y)--(T);
\draw[->,densely dotted] (X)--(T);
\end{tikzpicture}
\]
for some Zariski open neighbourhood $T$ of $0\in\mathbb{A}^1$, such that
\begin{itemize}
\item The central fibre $g_0\colon X_0\to Y_0$ is isomorphic to the formal fibre $\widehat{f}$ of $f$.
\item All other fibres $g_t\colon X_t\to Y_t$ for $t\in T\backslash \{0\}$  are flopping contractions whose exceptional locus is a disjoint union of $(-1,-1)$-curves.
\end{itemize}
Regarding the flopping curve $C$ of $\widehat{f}$ as a curve in the central fibre of $\mathcal{X}\to T$, and thus as a curve in $\mathcal{X}$, then the GV invariant $n_j$ is defined to be the number of $g_t$-exceptional $(-1,-1)$-curves $C'$ with curve class $j[C]$, i.e.\ for every line bundle $\cL$ on $\cX$, 
\[
\deg (\cL|_{C'})=j \deg(\cL|_C).
\] 
The following is \cite[1.1]{TodaWidth}, and will be used to deduce the GV invariants later.

\begin{thm}[Toda]\label{num prop}
Suppose that $f\colon U\to \Spec \mathfrak{R}$ is a complete local flopping contraction of a single length $\ell$ $(-3,1)$-curve, where $U$ is smooth.  Then $n_1=\CA^{\ab}$ and
 \[
 \dim_{\mathbb{C}}\CA=\dim_{\mathbb{C}}\CA^{\ab}+\sum_{j=2}^{\ell}j^2\cdot n_j,
 \]
 where $n_j\in\mathbb{Z}_{\geq 1}$ are the Gopakumar--Vafa invariants associated to the curve.
\end{thm}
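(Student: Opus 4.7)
This result is Toda's formula from \cite{TodaWidth}, and the plan is to interpret $\dim_{\mathbb{C}}\CA$ as a deformation-invariant DT-type count along the family $\mathcal{X}\to T$ of the setup, then evaluate it on a generic fibre, where the exceptional locus is a disjoint union of $(-1,-1)$-curves.

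First, use \ref{CA summary}\eqref{CA summary 2} to present $\CA$ as a finite-dimensional Jacobi algebra of a superpotential, hence a CY$3$ algebra in the sense of Ginzburg. The relevant deformation invariant is the Behrend-weighted Euler characteristic of the moduli of finite-dimensional $\CA$-modules; since all simples of $\CA$ are the one-dimensional vertex module, this weighted count equals $\dim_{\mathbb{C}}\CA$ up to signs determined by the Behrend function. Standard DT-invariance results then ensure that this virtual count is constant in flat families, so the same numerical invariant can be evaluated on any generic $X_t$.

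Second, on a generic fibre the exceptional locus is $\bigsqcup_{i=1}^{N}C'_i$ with $C'_i$ a $(-1,-1)$-curve of class $k_i[C]$, and the associated contraction-algebra DT series factors as a product over the conifold points. Each local factor is the Szendr\H{o}i conifold series, and the key local computation is that the Behrend-weighted count in curve class $[C]$ of a class-$k[C]$ conifold equals $k^2$; this is the noncommutative refinement of the multiple-cover formula in ordinary DT theory, and is forced by the quadratic multiplicity of a length-$k$ thickening at the conifold vertex. Summing over $i$ gives
\[
\dim_{\mathbb{C}}\CA=\sum_{i=1}^{N}k_i^2=\sum_{j=1}^{\ell}j^2 n_j.
\]

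Third, for $\CA^{\ab}$ the same argument runs with commutative deformations in place of noncommutative ones and so reduces to ordinary DT on the generic fibre. In this setting, a class-$k[C]$ curve contributes $1$ when $k=1$ and $0$ when $k\ge 2$, because a multiple-class $(-1,-1)$-curve admits no commutative deformation in class $[C]$ (the higher-multiplicity contributions are the purely noncommutative ones that get killed by abelianisation). Hence $\dim_{\mathbb{C}}\CA^{\ab}=n_1$, as required.

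The main obstacle is the local NC DT computation of step two: rigorously extracting the factor $k^2$ for a class-$k[C]$ conifold from Szendr\H{o}i's explicit Jacobi-algebra enumeration, and matching it to the abstract $\Ext^2$-computation underlying $\dim_{\mathbb{C}}\CA$. The deformation-invariance in step one and the vanishing in step three are comparatively routine once the CY$3$/virtual-count framework is set up, but the quadratic multiple-cover factor must be genuinely derived rather than assumed.
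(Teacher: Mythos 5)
The paper does not prove this statement at all: it is imported verbatim as \cite[1.1]{TodaWidth} (``The following is \cite[1.1]{TodaWidth}\dots''), so the only thing to compare your attempt against is Toda's own argument. Your global architecture does match his: one deforms along the BKL family to a generic fibre whose exceptional locus is a disjoint union of $(-1,-1)$-curves, and shows that noncommutatively each curve of class $j[C]$ contributes $j^2$ while commutatively only the class-$[C]$ curves contribute, each giving $1$. The third step (commutative deformations of $\cO_{\mathbb{P}^1}(-1)$ compute the length of Katz's moduli space, hence $n_1$) is also essentially right.

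The problems are in how you propose to implement the first two steps. First, $\dim_{\mathbb{C}}\CA$ is \emph{not} a Behrend-weighted Euler characteristic of a moduli space of finite-dimensional $\CA$-modules --- those weighted counts assemble into a noncommutative DT generating series, and there is no ``up to signs'' extraction of the plain vector-space dimension of the algebra from it; consequently ``standard DT deformation invariance'' cannot be invoked to move $\dim_{\mathbb{C}}\CA$ to a generic fibre. The quantity that actually propagates along the family is $\dim_{\mathbb{C}}\Ext^2_{\mathfrak{R}}(M,M)$ for the rigid module $M$ of \ref{CA summary}\eqref{CA summary 3}: since $\Ext^1_{\mathfrak{R}}(M,M)=0$, the pair $(\mathfrak{R},M)$ deforms along the BKL family and the relevant Ext-dimension is controlled by semicontinuity and the $2$-Calabi--Yau symmetry of $\uCM\mathfrak{R}$, with no virtual classes needed. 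Second, the factor $j^2$ is not a ``noncommutative multiple-cover formula'': at a conifold point of the generic fibre the deformed rigid module decomposes as $N^{\oplus j}\oplus(\text{free})$ with $N$ the rank-one rigid module and $j$ the curve-class multiple, and
\[
\dim_{\mathbb{C}}\Ext^2\bigl(N^{\oplus j},N^{\oplus j}\bigr)=j^2\dim_{\mathbb{C}}\Ext^2(N,N)=j^2
\]
by bilinearity --- equivalently, the semisimple part of the local noncommutative deformation base is a $j\times j$ matrix algebra, which is invisible to the abelianisation (whence the discrepancy between $\sum j^2 n_j$ and $n_1$). You explicitly defer exactly this computation (``must be genuinely derived rather than assumed''), so as written the proposal is a strategy outline with its key step open and with scaffolding (Behrend functions, conifold DT series) that is both heavier than necessary and not actually applicable to $\dim_{\mathbb{C}}\CA$. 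You also do not address the assertion $n_j\in\mathbb{Z}_{\geq 1}$ for all $j\leq\ell$, which is part of the statement.
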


\section{GV invariants do not determine flops}\label{calculate GV section}

This section computes the contraction algebras for the flopping contractions \ref{ex1} and \ref{ex2}, and as a corollary shows that the GV invariants attached to both flops are the same.  The two contraction algebras are then shown not to be isomorphic, and so the flops are distinguished at this finer level.

\subsection{Calculation of Contraction Algebras}\label{calc cont section}
Write $\Lambda_{\con}$ for the contraction algebra associated to the standard Laufer flop $Y\to\Spec L$ in \ref{ex2}.  The following is known.

\begin{example}\label{L cont alg}
With notation as in the introduction, 
\[
\Lambda_{\con}\cong\widehat{\Lambda}_{\con}\cong\frac{\mathbb{C}\langle x,y\rangle}{(xy+yx,x^3-y^2)}=\frac{\mathbb{A}_{-1}}{x^3-y^2},
\]
where the first isomorphism is \cite[2.17]{DW1} and the second is \cite[1.3]{DW1}.  Thus $\Lambda_{\con}$ is a $9$-dimensional not-commutative ring, given by superpotential $W=x^4-xy^2$.
\end{example}

The calculation of the contraction algebra associated to the new flop uses a very similar method to the above example.  To set notation,  write $\Gamma_{\con}$ for the contraction algebra associated to $X\to \Spec R$ in \ref{ex1}.

\begin{prop}\label{R cont alg}
There is an isomorphism
\[
\Gamma_{\con}\cong\frac{\mathbb{C}\langle a,b\rangle}{ab+ba, -a^2 + b^3 + aba}.
\]
This is a nine-dimensional not-commutative ring.
\end{prop}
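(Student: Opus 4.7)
The approach mirrors that of the Laufer flop in \ref{L cont alg}, invoking \ref{CA summary}\eqref{CA summary 3} for an explicit rigid non-free rank-two CM $R$-module.

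\emph{Step 1 (the module).} The small resolution of \ref{ex1}, being smooth and crepant, corresponds under Van den Bergh's theorem to a rank-two CM module $N$ such that $\End_R(R\oplus N)$ is an NCCR derived-equivalent to $X\to\Spec R$; concretely $N$ is obtained as a matrix factorization of $f$ built from the syzygies of the reflexive ideal $I=(vx-uy,\,xy^2+v^2,\,x^2y+uv)$ used to construct $X$. Rigidity $\Ext^1_R(N,N)=0$ is automatic, so \ref{CA summary}\eqref{CA summary 3} gives $\Gamma_{\con}\cong\uEnd_R(N)$.

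\emph{Step 2 (generators and relations).} Reading off the matrix factorization, identify two explicit $R$-linear endomorphisms $a,b\in\End_R(N)$ which together with the identity generate $\End_R(N)$ over $\mathbb{C}$. Direct matrix calculation verifies that both $ab+ba$ and $-a^2+b^3+aba$ factor through a free summand, hence vanish in $\uEnd_R(N)$. This yields a surjective $\mathbb{C}$-algebra map
\[
\Theta\colon\frac{\mathbb{C}\langle a,b\rangle}{(ab+ba,\,-a^2+b^3+aba)}\twoheadrightarrow\Gamma_{\con}.
\]

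\emph{Step 3 (upper dimension bound).} In the source, $ab=-ba$ normalises every word to the form $b^ia^j$. Multiplying $-a^2+b^3+aba=0$ on the left and right by $a$, and using $aba=-a^2b$, gives
\[
a^3(1+b)=-b^3a,\qquad a^3(1-b)=b^3a.
\]
Summing yields $2a^3=0$, so $a^3=0$ and hence $b^3a=0$. Iterating $a^2=b^3-a^2b$ with $a^2b^3=b^3a^2=(b^3a)a=0$ produces the closed form $a^2=b^3-b^4+b^5$. Hence the source is spanned by the nine monomials $\{1,b,b^2,b^3,b^4,b^5,a,ba,b^2a\}$, so has $\mathbb{C}$-dimension at most nine.

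\emph{Step 4 (conclusion).} By \ref{CA summary}\eqref{CA summary 3}, $\dim_{\mathbb{C}}\Gamma_{\con}=\dim_{\mathbb{C}}\Ext^2_R(N,N)$, and the 2-periodic resolution from the matrix factorization reduces this to an explicit matrix-rank computation that yields nine. Combined with Step 3, $\Theta$ is a surjection between nine-dimensional algebras, so is an isomorphism; non-commutativity of $\Gamma_{\con}$ is then immediate since $ab=-ba\neq ba$ in the source, whose independence from $ba$ is forced once the dimension count saturates at nine.

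The main obstacle is the matrix work in Steps 1--2: producing the explicit rank-two matrix factorization of the nine-term hypersurface $f$, and verifying on the nose that the chosen $a,b$ satisfy the precise noncommutative relations in $\uEnd_R(N)$. These are elementary but very long computations, presumably discharged with the Singular code of Appendix~\ref{theappendix}; the normal-form argument of Step 3 is the only fully human-scale part of the proof.
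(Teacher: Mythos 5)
Your proposal follows essentially the same route as the paper: produce an explicit matrix factorisation of $f$ realising the rank-two rigid module, read off generators $a,b$ of $\uEnd_R(M)$ and verify the two relations hold there (giving a surjection from the abstract algebra onto $\Gamma_{\con}$), bound the dimension of the abstract algebra by $9$, and match this against $\dim_{\mathbb{C}}\Gamma_{\con}=\dim_{\mathbb{C}}\Ext^2_R(M,M)=9$ from \ref{CA summary}\eqref{CA summary 3}. The only substantive difference is that the paper obtains the matrix factorisation and the generators $a,b$ by specialising the Aspinwall--Morrison universal flop and using their quiver presentation of $\End_R(R\oplus M)$, which is what makes ``$a,b$ generate $\uEnd_R(M)$ and satisfy these relations'' a finite check rather than a hope; your Step 2 defers exactly this to unspecified ``direct matrix calculation''.

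There is one genuine gap, in Step 3. From $ab=-ba$, $a^3=0$, $b^3a=0$ and $a^2=b^3-b^4+b^5$ you conclude that $\{1,b,b^2,b^3,b^4,b^5,a,ba,b^2a\}$ spans, but nothing you have derived bounds the powers of $b$: those four identities are consistent with $1,b,b^2,\dots$ being independent (squaring $a^2=b^3(1-b+b^2)$ only yields $b^6(1-b+b^2)^2=0$, and $1-b+b^2$ is not a priori a unit in the non-completed algebra $\mathbb{C}\langle a,b\rangle/(\cdots)$). You must additionally show $b^6=0$, which does follow directly: $b^6=(a^2-aba)^2$, and after using $ab=-ba$ to rewrite each term, every summand contains $a^3$ or $a^4$ and hence vanishes --- this is precisely the computation recorded in \ref{noniso prep}\eqref{noniso prep 1}. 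With that one line added, your Step 3 is complete and the rest of the argument closes as stated.
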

\begin{proof}
The most direct method to prove this is to specialise the universal flop
\[
u^2+v^2Y+x^2\upalpha + 2\upbeta xv + (\upalpha Y-\upbeta^2)\upgamma^2
\]
given in \cite[(46)]{AM} at $\upalpha=x$, $Y=x+y$, $\upbeta=0$ and $\upgamma=y$ to give
\[
u^2+v^2(x+y)+x^3+x(x+y)y^2,
\]
which equals $f$ in \ref{ex1}.  Consider the cokernel $M$ of the matrix $\Upphi$ in the following matrix factorisation
\[
R^4\xrightarrow{\Uppsi\colonequals\left(\begin{smallmatrix} u&x&v&y\\
-x^2&u&-xy&v\\
-vx-vy&xy+y^2&u&-x\\
-x^2y-xy^2&-vx-vy&x^2&u\end{smallmatrix}\right)} 
R^4\xrightarrow{\Upphi\colonequals\left(\begin{smallmatrix} 
u&-x&-v&-y\\
x^2&u&xy&-v\\
vx+vy&-(xy+y^2)&u&x\\
x^2y+xy^2&vx+vy&-x^2&u\end{smallmatrix}\right)} R^4.
\]
Then by \cite[\S 4]{AM} $\End_R(R\oplus M)$ is presented as the path algebra of the quiver 
\[
\begin{array}{c}
\begin{tikzpicture}[>=stealth]
\node (C1) at (0,0) {$\scriptstyle R$};
\node (C1a) at (-0.1,0)  {};
\node (C2) at (1.75,0) {$\scriptstyle M$};
\node (C2a) at (1.85,0.05) {};
\node (C2b) at (1.85,-0.05) {};
\draw [->,bend left=20,looseness=1,pos=0.5] (C1) to node[above]  {$\scriptstyle d=\left(\begin{smallmatrix} 0\\0\\0\\1\end{smallmatrix}\right)$} (C2);
\draw [->,bend left=20,looseness=1,pos=0.5] (C2) to node[below]  {$\scriptstyle c=(\begin{smallmatrix} u&-x&-v&-y\end{smallmatrix})\quad$} (C1);
\draw[->]  (C2b) edge [in=-80,out=-10,loop,looseness=12,pos=0.5] node[below right, pos=0.05] {$\scriptstyle b=\left(\begin{smallmatrix} 0&0&1&0\\0&0&0&-1\\-(x+y)&0&0&0\\0&x+y&0&0\end{smallmatrix}\right)$} (C2b);
\draw[->]  (C2a) edge [in=80,out=10,loop,looseness=12,pos=0.5] node[above right=-0.25] 
{$\scriptstyle a=\left(\begin{smallmatrix} 0&1&0&0\\-x&0&0&0\\0&0&0&1\\0&0&-x&0\end{smallmatrix}\right)$} (C2a);
\end{tikzpicture}
\end{array}
\] 
where the relations are determined by matrix multiplication, up to the column space of the matrix $\Uppsi$.  For example, using the above matrices, it can be seen directly that $a\circ b+b\circ a=0$, and that
\[
-a^2+b^3+a\circ b\circ a - b\circ d\circ c - d\circ c\circ b=\left(\begin{smallmatrix} x&0&-y&0\\
u&0&-v&0\\
xy+y^2&0&x&0\\
-vx-vy&0&-u&0\end{smallmatrix}\right),
\]
which belongs to the column space of $\Uppsi$, and thus is zero.  Factoring out the vertex corresponding to $R$ in the above presentation, and killing all arrows that factor through it, it follows that $\Gamma_{\con}=\uEnd_R(M)$  can be presented as $\mathbb{C}\langle a,b\rangle$ subject to at least the relations $ab+ba=0$ and $-a^2+b^3+aba=0$.  Since these relations give an algebra which is nine-dimensional (either by the Diamond Lemma, or  \S\ref{app Acon}), and $\Gamma_{\con}$ is nine-dimensional (see \S\ref{app Acon}), it follows that these are all the relations that are needed. 
\end{proof}

\begin{remark}
It is not obvious from the presentation in \ref{R cont alg}, but it is nevertheless true (see \ref{intro ok}) that $\Gamma_{\con}$ is a superpotential algebra.  
\end{remark}

The following is the main result of this subsection.
\begin{cor}\label{GV same}
The GV invariants attached to the flops in \ref{ex1} and \ref{ex2} are the same.
\end{cor}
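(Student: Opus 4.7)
The strategy is to apply Toda's formula (Theorem \ref{num prop}) to both flops. Each of $X\to\Spec R$ and $Y\to\Spec L$ is a single curve flopping contraction whose scheme-theoretic fibre has multiplicity two (verified in Examples \ref{ex1} and \ref{ex2}), and in each case the reduced fibre is $\mathbb{P}^1$. This identifies both as $(-3,1)$-curves of length $\ell=2$, in which case Toda's formula collapses to
\[
\dim_{\mathbb{C}}\CA=n_1+4n_2,\qquad n_1=\dim_{\mathbb{C}}\CA^{\ab}.
\]
So it suffices to show that $\Lambda_{\con}$ and $\Gamma_{\con}$ have the same total dimension and the same abelianized dimension; the two GV invariants $(n_1,n_2)$ are then forced to coincide.

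Total dimensions are already in hand: Example \ref{L cont alg} gives $\dim\Lambda_{\con}=9$ and Proposition \ref{R cont alg} gives $\dim\Gamma_{\con}=9$. It therefore remains to compute the two abelianizations from the presentations. For $\Lambda_{\con}=\mathbb{C}\langle x,y\rangle/(xy+yx,\,x^3-y^2)$, abelianizing imposes $xy=0$ (forcing any monomial mixing $x$ and $y$ to vanish) alongside $x^3=y^2$, so a basis of $\Lambda_{\con}^{\ab}$ is $\{1,x,x^2,x^3{=}y^2,y\}$. For $\Gamma_{\con}=\mathbb{C}\langle a,b\rangle/(ab+ba,\,-a^2+b^3+aba)$, abelianizing gives $ab=0$, which kills the term $a^2b$ and reduces the second relation to $a^2=b^3$; thus $\Gamma_{\con}^{\ab}$ has basis $\{1,a,a^2{=}b^3,b,b^2\}$. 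Both abelianizations are therefore five dimensional.

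Combining the two ingredients, Toda's formula yields
\[
n_1=5,\qquad 9=5+4n_2,\qquad n_2=1
\]
for both $f$ and $g$, and hence the GV tuples agree, both being $(5,1)$.

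The plan is essentially mechanical once length two is fixed and Theorem \ref{num prop} is invoked; the only place where one could slip is in handling the closure of the relation ideals, but since both abelianized algebras are visibly finite dimensional quotients of $\mathbb{C}[x,y]$, no closure issue arises in the commutative setting. The genuinely delicate content of the paper is the \emph{non}-isomorphism in the succeeding Theorem \ref{Acon not iso}, which is where noncommutativity is essential; here, in the abelian picture that governs the GV invariants, the two flops are indistinguishable, and that is exactly the point.
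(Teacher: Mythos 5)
Your proposal is correct and follows essentially the same route as the paper: the paper's proof likewise applies Toda's formula (Theorem \ref{num prop}) to the explicit presentations in \ref{L cont alg} and \ref{R cont alg} to read off $n_1=5$ and $n_2=1$ in both cases; you have merely written out the abelianization and dimension computations that the paper leaves implicit. (The paper's subsequent remark also notes a variant that avoids the algebra structure entirely, using only $\dim\CA=9$ and the uniqueness of the decomposition $9=5\cdot 1^2+1\cdot 2^2$, but that is presented as an alternative, not as the proof of the corollary.)
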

\begin{proof}
Via the explicit presentations in \ref{L cont alg} and \ref{R cont alg}, we see immediately using Toda's formula \ref{num prop} that $n_1=5$ and $n_2=1$ in both cases.
\end{proof}

\begin{remark}
It is not necessary to compute the algebra structures in order to deduce that the GV invariants of the two flops are the same.  Using 
\ref{CA summary}\eqref{CA summary 3}, the dimension of both $\Lambda_{\con}$ and $\Gamma_{\con}$ can be seen to be nine directly, without knowing the algebra structure.  We outline the code in appendix \S\ref{app Acon}.  Then, since both are $cD_4$ flops, neither contraction algebra is commutative by \ref{CA summary}\eqref{CA summary 2}.  The abelianization of any not-commutative contraction algebra must be at least four dimensional, being the factor of $\mathbb{C}[[x,y]]$ by two relations in which each word is quadratic or higher.  Hence by Toda's formula \ref{num prop}, since the only possibility of writing $9$ as a sum of squares is $9=5.1^2+1.2^2$, it follows the GV invariants of both flops must be the same, namely $n_1=5$ and $n_2=1$. These numerics are how this example was discovered.
\end{remark}

\subsection{The contraction algebras are not isomorphic}

The proof that $\Lambda_{\con}$ is not isomorphic to $\Gamma_{\con}$ requires the following preparatory lemma.

\begin{lemma}\label{noniso prep}
With notation as above, the following statements hold.
\begin{enumerate}
\item\label{noniso prep 1} In $\Gamma_{\con}$, we have $a^3=0$ and $b^6=0$.
\item\label{noniso prep 2} $\Lambda_{\con}$ has basis $\{ 1, y, x, y^2, yx, x^2, y^2x, yx^2, y^2x^2 \}$.
\end{enumerate}
\end{lemma}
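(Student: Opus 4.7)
The plan is to prove (1) by exploiting the two relations $ab+ba=0$ and $-a^2+b^3+aba=0$ to produce \emph{two} expressions for $a^3$, from which $a^3=0$ falls out, and then to deduce $b^6=0$ quickly.  First, I would use $ba=-ab$ to simplify $aba$: specifically, $aba = a(-ab) = -a^2b$, so the second relation becomes the more usable identity
\[
a^2 = b^3 - a^2b, \qquad\text{equivalently}\qquad a^2(1+b) = b^3.
\]
Now I would compute $a\cdot a^2$ and $a^2\cdot a$ separately.  Using $ab^3=-b^3a$ (which follows from $ab=-ba$ applied three times), the first gives $a^3 = ab^3 - a^3 b = -b^3a - a^3 b$, while the second gives $a^3 = b^3a - a^2ba = b^3a + a^3 b$ after applying $ba=-ab$ once more to move the $a$ past the $b$ inside $a^2ba$.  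Adding these two equations for $a^3$ cancels the terms on the right-hand sides, leaving $2a^3=0$, and hence $a^3=0$.

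For $b^6=0$, the key preparatory observation is that $a^2$ commutes with $b$: indeed $ba^2 = (ba)a = -ab\cdot a = -a(ba) = a^2b$. Squaring the identity $b^3 = a^2+a^2b$ and using this centrality yields
\[
b^6 = (a^2+a^2b)^2 = a^4 + 2a^4b + a^4b^2 = a\cdot a^3\cdot(1+b)^2,
\]
which vanishes by (1). This is entirely formal once $a^3=0$ is in hand.

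For part (2), since $\dim_{\mathbb{C}}\Lambda_{\con}=9$ is already established in \ref{L cont alg}, I need only verify that the nine elements $\{1,y,x,y^2,yx,x^2,y^2x,yx^2,y^2x^2\}$ \emph{span} $\Lambda_{\con}$; linear independence is then automatic. The relation $xy=-yx$ lets me write every monomial in the form $\pm y^ix^j$, and the relation $y^2=x^3$ (together with the fact that $y^2$, being equal to the central element $x^3$, commutes with everything) lets me restrict to $i\in\{0,1,2\}$. The main remaining task is to eliminate $yx^j$ for $j\geq 3$, and this I would do by computing
\[
y^3 = y\cdot y^2 = y\cdot x^3 = -x\cdot yx^2 = x^2\cdot yx = -x^3\cdot y = -y^2\cdot y = -y^3,
\]
so $y^3=0$, and hence also $yx^3 = -x^3 y = -y^3 = 0$. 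This leaves precisely the nine listed monomials as a spanning set.

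The main obstacle, if any, is bookkeeping the signs consistently in the $a^3$ computation for (1); once the two expressions for $a^3$ are written down correctly, both conclusions follow in a few lines. Part (2) is then essentially a normal-form argument matched against the known dimension.
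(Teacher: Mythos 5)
Your proof is correct. For part (1) you take essentially the paper's route: the paper also multiplies the relation $a^2=b^3+aba$ by $a$ on each side and plays the two resulting expressions for $a^3$ off against each other using $ab=-ba$; your preliminary simplification $aba=-a^2b$ and the observation that $a^2$ is central make the bookkeeping slightly cleaner, and your factorisation $b^6=a^4(1+b)^2$ is a tidier way to see the vanishing than the paper's direct expansion of $(a^2-aba)^2$. For part (2) you genuinely diverge: the paper simply invokes the Diamond Lemma (or the citation to \cite[3.14]{DW1}, or the Magma code in the appendix), whereas you give a hand argument, using the known equality $\dim_{\mathbb{C}}\Lambda_{\con}=9$ from \ref{L cont alg} to reduce the problem to spanning, and then running a normal-form reduction whose only nontrivial input is the identity $y^3=0$ (equivalently $yx^3=0$), which you derive correctly from the anticommutation of $x$ and $y$ together with $x^3=y^2$. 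What your approach buys is self-containedness — no appeal to a rewriting-system confluence check or to computer algebra — at the cost of leaning on the externally supplied dimension count; the Diamond Lemma route proves linear independence directly and hence reproves the dimension. One small imprecision: the reduction to $i\in\{0,1,2\}$ is forced by $y^3=0$, not by $y^2=x^3$ alone (which would only trade $y$-degree for $x$-degree), but since you establish $y^3=0$ anyway the argument goes through once the two reductions ($x^3\to y^2$ to bound $j$, then $y^3=0$ to bound $i$) are ordered correctly.
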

\begin{proof}
(1) Multiplying the defining equation $a^2=b^3+aba$ by $a$ on the right, and by $a$ on the left, it follows that 
\[
ab^3+a^2ba=a^3=b^3a+aba^2.
\]
Using the defining equation $ab=-ba$ repeatedly,  
\[
ab^3-a^3b=a^3=-ab^3+a^3b
\]
Since the left hand side is the negative of the right hand side, it follows that $a^3=0$.   Then, simply squaring both sides of the defining equation $a^2-aba=b^3$ gives
\[
b^6=(a^2-aba)^2=a^4-a^3ba-aba^3-aba^2ba,
\]  
which is zero, using the relation $ab=-ba$ together with the fact that $a^3=0$.\\
(2) The first method to establish this is just to use the Diamond Lemma directly, and indeed the stated basis is exactly the one used in \cite[3.14]{DW1}.  The second method, using magma, is outlined in the appendix (\S\ref{app Acon}).
\end{proof}

\begin{thm}\label{Acon not iso}
$\Lambda_{\con}$ is not isomorphic to $\Gamma_{\con}$.
\end{thm}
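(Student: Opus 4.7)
The plan is to suppose there is an isomorphism $\phi\colon\Gamma_{\con}\to\Lambda_{\con}$ and derive a contradiction by expanding $A=\phi(a)$ and $B=\phi(b)$ in the basis of \ref{noniso prep}\eqref{noniso prep 2}, then tracking them through the Jacobson-radical filtration $J\supset J^{2}\supset\cdots$ on $\Lambda_{\con}$ with $J=(x,y)$. The filtration is slightly subtle because $x^{3}=y^{2}$ is inhomogeneous, so $y^{2}\in J^{3}\setminus J^{4}$ and $y^{2}x=x^{4}\in J^{4}\setminus J^{5}$; this determines where each constraint lands.

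First I would apply \ref{noniso prep}\eqref{noniso prep 1}: since $a^{3}=0$ in $\Gamma_{\con}$, also $A^{3}=0$ in $\Lambda_{\con}$. Writing $A=\alpha y+\beta x+A_{2}$ with $A_{2}\in J^{2}$, the cube expands via $xy+yx=0$, $x^{3}=y^{2}$ and $y^{3}=0$ to $A^{3}\equiv\beta^{3}y^{2}+\alpha\beta^{2}yx^{2}\pmod{J^{4}}$; since $y^{2}$ and $yx^{2}$ are linearly independent in $J^{3}/J^{4}$, this forces $\beta=0$, and $\phi$ inducing an isomorphism on $J/J^{2}$ then gives $\alpha\ne 0$ and, writing $B=\gamma y+\delta x+B_{2}$, also $\delta\ne 0$. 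I would then bring in the two defining relations $AB+BA=0$ and $A^{2}(1+B)=B^{3}$ of $\Gamma_{\con}$, the latter being $-a^{2}+b^{3}+aba=0$ after using $aba=-a^{2}b$. Extracting the $y^{2}$- and $yx^{2}$-coefficients of both relations modulo $J^{4}$ gives a linear system in $\alpha,\gamma,\delta$ and the $x^{2}$-coefficient $p_{3}$ of $A_{2}$; elimination yields $3\alpha^{2}\gamma=0$, so $\gamma=0$, after which $p_{3}=0$ together with $\alpha^{2}=\delta^{3}$.

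The contradiction then appears one step further out, at the coefficient of $y^{2}x\in J^{4}/J^{5}$. Under the constraints above, the multiplication derived from \ref{noniso prep}\eqref{noniso prep 2} gives $A^{2}=\alpha^{2}y^{2}+\varepsilon\,y^{2}x^{2}$ for some $\varepsilon\in\mathbb{C}$, with the potential $y^{2}x$ cross-term cancelling via $yxy=-y^{2}x$. Multiplying by $B=\delta x+B_{2}$ and using that $y^{2}\cdot B_{2}=0$ and that $y^{2}x^{2}$ lies in the socle yields $A^{2}B=\alpha^{2}\delta\,y^{2}x$. A direct expansion shows $B^{3}\in\mathbb{C}y^{2}\oplus\mathbb{C}y^{2}x^{2}$, with no $y^{2}x$-contribution. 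Comparing $y^{2}x$-coefficients in $A^{2}+A^{2}B=B^{3}$ therefore forces $\alpha^{2}\delta=0$, contradicting $\alpha,\delta\ne 0$. The main obstacle is precisely that every lower-order comparison is consistent: the Hilbert series of the $J$-adic filtration, the abelianization, and (as one can check) the centres of the two algebras all agree, so the asymmetry created by the $aba$-term in $\Gamma_{\con}$ only surfaces after one is driven all the way into the $J^{4}/J^{5}$ layer.
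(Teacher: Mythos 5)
Your proposal is correct and follows essentially the same route as the paper: assume an isomorphism, expand the images of $a$ and $b$ in the basis of \ref{noniso prep}\eqref{noniso prep 2}, use $a^3=0$ to kill the $x$-coefficient of $\phi(a)$, pin down further coefficients from the relations, and derive the contradiction $\alpha^2\delta=0$ from the $y^2x$-coefficient of the second relation (the paper's $\uplambda_1^2\upmu_2=0$). The only difference is organizational: you filter by powers of the Jacobson radical and eliminate between the two relations at the $J^3/J^4$ level to get $\gamma=p_3=0$, where the paper instead reads $\uplambda_5=0$ off the socle coefficient of $\uppsi(a)^3$ and $\upmu_1=\upmu_5=0$ off $\uppsi(a)\uppsi(b)+\uppsi(b)\uppsi(a)=0$, verifying the expansions by computer algebra.
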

\begin{proof}
 Let $\uppsi\colon \Gamma_{\con}\to\Lambda_{\con}$ be an arbitrary isomorphism; we aim for a contradiction. As $\Lambda_{\con},\Gamma_{\con}\in\art_1$, in both cases their Jacobson radical is their path ideal. Hence under the isomorphism $\uppsi$, necessarily the generators $a$ and $b$ must map to the Jacobson radical, and so by \ref{noniso prep}\eqref{noniso prep 2} we may write
\begin{align*}
\uppsi(a) & = \uplambda_1y+\uplambda_2x+\uplambda_3y^2+\hdots+\uplambda_8y^2x^2\\
\uppsi(b)&= \upmu_1y+\upmu_2x+\upmu_3y^2+\hdots+\upmu_8y^2x^2
\end{align*}
for some scalars $\uplambda_1,\hdots,\upmu_8$. Now by \ref{noniso prep}\eqref{noniso prep 1}, $\uppsi(a)^3=\uppsi(a^3)=\uppsi(0)=0$, thus 
\[
(\uplambda_1y+\uplambda_2x+\uplambda_3y^2+\hdots+\uplambda_8y^2x^2)^3=0
\]
in $\Lambda_{\con}$. Multiplying out the left hand side,
 and using the relations of $\Lambda_{\con}$, we can express the left hand side in terms of  the basis \ref{noniso prep}\eqref{noniso prep 2} of $\Lambda_{\con}$ (see e.g.\ \S\ref{app Acon}). Doing this,
\[
(\uplambda_2^3)y^2 +
(\uplambda_1^2\uplambda_2 + 3\uplambda_2^2\uplambda_5) y^2x +
(\uplambda_1\uplambda_2^2)yx^2 +
3(\uplambda_1^2\uplambda_5 + \uplambda_2^2\uplambda_3 + \uplambda_2\uplambda_5^2)y^2x^2=0.
\]
 Being a basis, all coefficients must be zero.  Hence $\uplambda_2=0$.  This in turn implies that:
\begin{itemize}
\item $\uplambda_1\neq 0$. This is since $a$ belongs to the Jacobson radical but not the Jacobson radical squared, hence so does $\uppsi(a)$.  As $\uplambda_2=0$ above, necessarily $\uplambda_1\neq 0$.
\item $\uplambda_5=0$. This is a consequence of the coefficient $\uplambda_1^2\uplambda_5 + \uplambda_2^2\uplambda_3 + \uplambda_2\uplambda_5^2$ being zero, together with the fact that $\uplambda_2=0$ and $\uplambda_1\neq 0$. 
\end{itemize}
Then, observing that $\uppsi(ab+ba)=0$ since $ab+ba=0$, we see that
\[
\uppsi(a)\uppsi(b)+\uppsi(b)\uppsi(a)=0.
\]
Again, multiplying out the above expressions (using $\uplambda_1=\uplambda_5=0$, see e.g.\ \S\ref{app Acon}), expressing in terms of the basis of $\Lambda_{\con}$ gives
\[
2(\uplambda_1\upmu_1)y^2 +  
2(\uplambda_3\upmu_2)y^2x +
2(\uplambda_1\upmu_5)yx^2 +
 2(\uplambda_1\upmu_7 + \uplambda_3\upmu_5 -
    \uplambda_4\upmu_4 + \uplambda_6\upmu_2 + \uplambda_7\upmu_1)y^2x^2=0.
\]
Since $\uplambda_1\neq 0$, necessarily $\upmu_1=\upmu_5=0$.  Again, since $b$ belongs to the radical but not the radical squared, $\upmu_2\neq 0$.

Finally, since $-\uppsi(a)^2+\uppsi(b)^3+\uppsi(a)\uppsi(b)\uppsi(a)=0$, multiplying out and expressing in terms of the basis of $\Lambda_{\con}$ (again see e.g.\ \S\ref{app Acon}), using $\uplambda_2=\uplambda_5=\upmu_1=\upmu_5=0$, we see
\[
(-\uplambda_1^2 + \upmu_2^3)y^2+(-\uplambda_1^2\upmu_2)y^2x +(-2\uplambda_1\uplambda_7 + \uplambda_4^2 +
    3\upmu_2^2\upmu_3)y^2x^2=0
\]
Hence $\uplambda_1^2\upmu_2=0$, which is a contradiction.  Thus the isomorphism $\uppsi$ cannot exist.
\end{proof}

\begin{remark}\label{intro ok}
Although we don't strictly need this to show \ref{Acon not iso}, in the notation of the introduction it turns out, e.g. using the Shirayanagi algorithm \cite{Shirayanagi}, that
\[
\Gamma_{\con}\cong \frac{\widehat{\mathbb{A}}_{-1}}{x^4+x^3-y^2}.
\]
Thus $\Gamma_{\con}$ is a Jacobi algebra, given by the superpotential $W=x^5+x^4-xy^2$.  Combining this fact with \ref{Acon not iso} justifies the non-isomorphism \eqref{2 cont alg} in the introduction.
\end{remark}

We next show that commutative deformations cannot determine flopping neighbourhoods. This requires the following.
\begin{prop}\label{ab are iso}
$\Lambda_{\con}^{\ab}\cong\Gamma_{\con}^{\ab}$.
\end{prop}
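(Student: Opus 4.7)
The plan is to compute both abelianizations from the presentations given in \ref{L cont alg} and \ref{R cont alg}, and then exhibit an explicit isomorphism by swapping the two generators. Since both algebras are finite-dimensional, there is no subtlety with topological closure in the abelianization.

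First I would abelianize $\Lambda_{\con}$. Starting from the presentation
\[
\Lambda_{\con} \cong \frac{\mathbb{C}\langle x,y\rangle}{(xy+yx,\; x^3-y^2)},
\]
imposing $xy = yx$ turns the first relation into $2xy = 0$, i.e.\ $xy = 0$, while the second relation $x^3 = y^2$ survives. Hence
\[
\Lambda_{\con}^{\ab} \cong \frac{\mathbb{C}[x,y]}{(xy,\; x^3-y^2)}.
\]

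Next I would perform the analogous computation for $\Gamma_{\con}$. From the presentation
\[
\Gamma_{\con} \cong \frac{\mathbb{C}\langle a,b\rangle}{(ab+ba,\; -a^2 + b^3 + aba)},
\]
the first relation abelianizes to $ab = 0$, and inside the second relation the term $aba$ becomes $a^2 b$, which is then zero modulo $ab$. Thus the second relation reduces to $a^2 = b^3$, and
\[
\Gamma_{\con}^{\ab} \cong \frac{\mathbb{C}[a,b]}{(ab,\; a^2-b^3)}.
\]

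Finally, I would define a map $\mathbb{C}[x,y] \to \Gamma_{\con}^{\ab}$ by $x \mapsto b$, $y \mapsto a$. This sends $xy \mapsto ba = ab = 0$ and $x^3 - y^2 \mapsto b^3 - a^2 = 0$, so it descends to a well-defined homomorphism $\Lambda_{\con}^{\ab} \to \Gamma_{\con}^{\ab}$. The obvious inverse $a \mapsto y$, $b \mapsto x$ is well-defined by the symmetric check, so this is an isomorphism. There is no real obstacle here: once the two abelianizations are written down, the isomorphism is literally a relabelling of generators, and the only thing to watch is that abelianizing the cubic relation $-a^2 + b^3 + aba$ in $\Gamma_{\con}$ uses the already-imposed relation $ab=0$ to kill the $aba$ term.
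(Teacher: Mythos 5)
Your proposal is correct and follows essentially the same route as the paper: abelianize both presentations, use $ab=0$ to kill the $aba$ term in the cubic relation, and observe that the resulting rings $\mathbb{C}[x,y]/(xy,\,x^3-y^2)$ and $\mathbb{C}[a,b]/(ab,\,a^2-b^3)$ are identified by swapping the generators. The paper states the final isomorphism as ``visibly isomorphic'' where you write it out explicitly, but the content is identical.
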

\begin{proof}
By simply commuting variables in the presentations from \ref{L cont alg} and \ref{R cont alg}, 
\[
\Lambda_{\con}^{\ab}\cong\frac{\mathbb{C}[x,y]}{xy,x^2+y^3}
\quad\mbox{and}\quad
\Gamma_{\con}^{\ab}\cong\frac{\mathbb{C}[a,b]}{ab,a^2(b-1)+b^3}
=
\frac{\mathbb{C}[a,b]}{ab,-a^2+b^3}
\]
where the last equality holds simply since $ab=0$ implies $a^2b=0$.  The above two rings are visibly isomorphic.
\end{proof}

\begin{cor}
Flopping neighbourhoods are not determined by the commutative deformations of the reduced flopping curve.
\end{cor}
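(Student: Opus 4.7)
The plan is to present the pair of flops already constructed in Examples \ref{ex1} and \ref{ex2} as the desired counterexample, and to observe that everything needed has essentially been assembled over the preceding sections. The key conceptual point is that the commutative deformations of the reduced flopping curve $C^{\redu}\cong\mathbb{P}^1$ are controlled by the abelianization $\CA^{\ab}$ of the contraction algebra: restricting the noncommutative deformation functor $\Def\colon\art_1\to\Sets$ of \ref{CA summary} to the full subcategory of commutative augmented Artinian $\mathbb{C}$-algebras yields the classical commutative deformation functor of $\cO_C(-1)$, and since the former is prorepresented by $\CA$, the universal property of abelianization implies that the latter is prorepresented by $\CA^{\ab}$.

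With that identification, the corollary is immediate. For the flops $f\colon X\to\Spec R$ and $g\colon Y\to\Spec L$ of \ref{ex1} and \ref{ex2}, Proposition \ref{ab are iso} gives an isomorphism $\Lambda_{\con}^{\ab}\cong\Gamma_{\con}^{\ab}$, so the commutative deformation theories of the reduced flopping curves in the two flops coincide. However, by Remark \ref{not iso} (equivalently by Theorem \ref{Acon not iso}, which is strictly stronger), the two flopping neighbourhoods are not analytically isomorphic. Hence the commutative deformations of the reduced flopping curve do not determine the surrounding flopping neighbourhood.

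The only step that is not purely a citation is the identification of commutative deformations with $\CA^{\ab}$, and even this is essentially formal given the setup of \S3: I would include a single sentence recording that restriction of $\Def$ to commutative test algebras is prorepresented by $\CA^{\ab}$, and refer to the definition of $\CA$ for the prorepresentability of the ambient functor. There is no genuine obstacle; the substance of the corollary was already packed into \ref{not iso} and \ref{ab are iso}, and the role of this corollary is to reinterpret those results geometrically.
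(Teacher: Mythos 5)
Your proposal is correct and follows essentially the same route as the paper: the paper's proof likewise identifies the commutative deformations of the reduced curve with the abelianization of the contraction algebra (citing \cite[3.2]{DW1} rather than re-deriving it), then combines \ref{ab are iso} with \ref{not iso}. No issues.
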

\begin{proof}
The commutative deformations of $\cO_{\mathbb{P}^1}(-1)$ are given by the abelianization of the contraction algebra \cite[3.2]{DW1}.   Since $\Lambda_{\con}^{\ab}\cong\Gamma_{\con}^{\ab}$ by \ref{ab are iso}, the commutative deformations of the reduced flopping curves in both flopping contractions  are the same.  However, the flops are not analytically isomorphic by \ref{not iso} or \ref{Acon not iso}.
\end{proof}

\appendix
\section{Code for Verification}\label{theappendix}

In this appendix we list computer algebra code which can be used to independently verify the claims made in the main text.

\subsection{Code for the Flops}\label{app flops}
\begin{itemize}
\item $R$ is has a unique singular point at the origin. Singular \cite{DGPS}:\\[1mm]
$
\begin{array}{l}
\tt{LIB``homolog.lib";}\\
\tt{ring\,\,  r=0,(u,v,x,y),dp;}\\
\tt{ideal\,\, i=u2+v2*(x+y)+x*(x2+xy2+y3);}\\
\tt{minAssGTZ(radical(slocus(std(i))));}\\
\end{array}
$
\smallskip

\item $I\colonequals (vx-uy, xy^2+v^2, x^2y+uv)$ is a rank one reflexive $R$-module. Using Macaulay2 \cite{M2}:\\[1mm]
$
\begin{array}{l}
\tt{loadPackage ``Divisor";}\\
\tt{R = QQ[u,v,x,y]/ideal(u^2+v^2*(x+y)+x*(x^2+x*y^2+y^3));}\\
\tt{i=ideal(v*x-u*y, x*y^2+v^2, x^2*y+u*v);}\\
\tt{isReflexive(i)};
\end{array}
$
\smallskip

\noindent
Similarly $(x^2+y^3,vx+uy,ux-vy^2)$ is a rank one reflexive $L$-module.
\smallskip
\item Blowup of $\Spec R$ at the ideal I. Singular:\\[1mm]
$
\begin{array}{l}
\tt{LIB``homolog.lib";}\\
\tt{LIB``resolve.lib";}\\
\tt{ring\,\, r=0,(u,v,x,y),dp;}\\
\tt{ideal\,\, i=u2+v2*(x+y)+x*(x2+xy2+y3);}\\
\tt{ideal\,\, Z=vx-uy, xy2+v2, x2y+uv;}\\
\tt{list\,\, blow=blowUp(i,Z);}\\
\tt{blow;}\\
\end{array}
$\\
$
\begin{array}{l}
\hspace{-0.65em}\left.\begin{array}{l}
\tt{def\,\, Q=blow[1];}\\
\tt{setring\,\, Q;}\\
\tt{sT;}\\
\tt{bM;}\\
\tt{dim\textunderscore slocus(sT);}\\
\tt{elim1(sT,x(1));}\\
\end{array}\right\} 
\mbox{Chart 1}\\
\end{array}
\qquad \begin{array}{l}
\left.\begin{array}{l}
\tt{def\,\, Q=blow[2];}\\
\tt{setring\,\, Q;}\\
\tt{sT;}\\
\tt{bM;}\\
\tt{dim\textunderscore slocus(sT);}
\end{array}\right\} 
\mbox{Chart 2}
\end{array}
$
\smallskip

\noindent
This returns the defining equations of each chart, the map to the base, and also the fact that each chart is smooth.  
\smallskip

\item Milnor and Tjurina numbers of $R$ are $12$ and $10$ respectively.  Singular: \\
$
\begin{array}{l}
\tt{LIB ``sing.lib";}\\
\tt{ring \,\, r=0,(u,v,x,y),dp;}\\
\tt{poly\,\, f=u2+v2*(x+y)+x*(x2+xy2+y3);}\\
\tt{milnor(f);tjurina(f);}\\
\end{array}
$
\smallskip

\noindent
Complete locally the Milnor number of $R$ drops to $11$, whilst the Tjurina number is still $10$.  Singular:\\[1mm]
$
\begin{array}{l}
\tt{LIB ``sing.lib";}\\
\tt{ring \,\, r=0,(u,v,x,y),ds;}\\
\tt{poly\,\, f=u2+v2*(x+y)+x*(x2+xy2+y3);}\\
\tt{milnor(f);tjurina(f);}\\
\end{array}
$
\smallskip

\item The Milnor and Tjurina numbers of $L$ are coded similarly, and are both $11$ in both dp and ds ordering.
\end{itemize}

\subsection{Code for the Contraction Algebras}\label{app Acon}

\begin{itemize}
\item For the new flop $R$, using  \ref{CA summary}\eqref{CA summary 3} the dimension of $\Gamma_{\con}$ is nine. Singular: \\[1mm]
$
\begin{array}{l}
\tt{LIB``homolog.lib";}\\
\tt{ring\,\, r= 0,(u,v,x,y),dp;}\\
\tt{ideal\,\, i=u2+v2*(x+y)+x*(x2+xy2+y3);}\\
\tt{qring\,\, S=std(i);}\\
\tt{module\,\, Ma=[vx-uy], [xy2+v2], [x2y+uv];}\\
\tt{module\,\, M=syz(Ma);}\\
\tt{module\,\, X=prune(syz(M));}\\
\tt{depth(X);}\tt{vdim(Ext(1,X,X));}\tt{vdim(Ext(2,X,X));}\\
\end{array}
$
\smallskip

\noindent
This can be easily adapted, using the reflexive module $(x^2+y^3,vx+uy,ux-vy^2)$ of $L$, to show that  $\Lambda_{\con}$ also has dimension nine.
\smallskip

\item The dimension and basis of $\Gamma_{\con}$.  The code for $\Lambda_{\con}$ is similar. Magma \cite{magma}:\\[1mm]
$
\begin{array}{l}
\tt{K := Rationals();}\\
\tt{F<a,b> := FreeAlgebra(K,2);}\\
\tt{I:=ideal<F\mid a*b + b*a , -a^2 + b^3 + a*b*a>;}\\
\tt{R:=F/I;}\\
\tt{W, Wf :=VectorSpace(R);}\\
\tt{[W.i@@Wf\colon  i\,\, in\,\, [1 .. Dimension(W)]];}\\
\end{array}
$
\smallskip

\item Expressing products in terms of the basis in the proof of \ref{Acon not iso}. Magma:\\[1mm]
$
\begin{array}{l}
\tt{K := Rationals();}\\
\tt{k3<l_1,l_2,l_3,l_4,l_5,l_6,l_7,l_8,m_1,m_2,m_3,m_4,m_5,m_6,m_7,m_8> := RationalFunctionField(K,16);}\\
\tt{F<x,y> := FreeAlgebra(k3,2);}\\
\tt{I:=ideal<F\mid x^3-y^2,  x*y+y*x>;}\\
\tt{T:=F/I;}\\
\tt{W, Wf :=VectorSpace(T);}\\
\tt{A:= l_1*y + l_2*x  + l_3*y^2+ l_4*y*x + l_5*x^2 + l_6*y^2*x + l_7*y*x^2 + l_8*y^2*x^2;}\\
\tt{Wf(A^3);}
\end{array}
$
\smallskip

\noindent
Output shows that $l_2=l_5=0$. Setting these to be zero: \\[1mm]
$
\begin{array}{l}
\tt{K := Rationals();}\\
\tt{k3<l_1,l_3,l_4,l_6,l_7,l_8,m_1,m_2,m_3,m_4,m_5,m_6,m_7,m_8> := RationalFunctionField(K,14);}\\
\tt{F<x,y> := FreeAlgebra(k3,2);}\\
\tt{I:=ideal<F\mid x^3-y^2,  x*y+y*x>;}\\
\tt{T:=F/I;}\\
\tt{W, Wf :=VectorSpace(T);}\\
\tt{A:= l_1*y  + l_3*y^2+ l_4*y*x  + l_6*y^2*x + l_7*y*x^2 + l_8*y^2*x^2;}\\
\tt{B:= m_1*y + m_2*x  + m_3*y^2+ m_4*y*x + m_5*x^2 + m_6*y^2*x + m_7*y*x^2 + m_8*y^2*x^2;}\\
\tt{Wf(A*B+B*A);}
\end{array}
$
\smallskip

\noindent
Output shows that $m_1=m_5=0$.  Setting these to be zero: \\[1mm]
$
\begin{array}{l}
\tt{K := Rationals();}\\
\tt{k3<l_1,l_3,l_4,l_6,l_7,l_8,m_2,m_3,m_4,m_6,m_7,m_8> := RationalFunctionField(K,12);}\\
\tt{F<x,y> := FreeAlgebra(k3,2);}\\
\tt{I:=ideal<F\mid x^3-y^2,  x*y+y*x>;}\\
\tt{T:=F/I;}\\
\tt{W, Wf :=VectorSpace(T);}\\
\tt{A:= l_1*y  + l_3*y^2+ l_4*y*x  + l_6*y^2*x + l_7*y*x^2 + l_8*y^2*x^2;}\\
\tt{B:= m_2*x  + m_3*y^2+ m_4*y*x + m_6*y^2*x + m_7*y*x^2 + m_8*y^2*x^2;}\\
\tt{Wf(-A^2+B^3+A*B*A);}
\end{array}
$
\smallskip

\noindent
Output shows that $l_1^2m_2=0$.
\end{itemize}

\end{document}